  \theoremstyle{plain}
    \newtheorem{thm}{Theorem}[section]
   \newtheorem{lemma}[thm]{Lemma}
    \newtheorem{subsec}[thm]{}
\theoremstyle{definition}
    \newtheorem{defn}[thm]{Definition}
        \newtheorem{remark}[thm]{Remark}
\theoremstyle{remark}
\newcommand{\threepartdef}[6]
{
	\left\{
		\begin{array}{lll}
			#1 & \mbox{if } #2 \\
			#3 & \mbox{if } #4 \\
			#5 & \mbox{if } #6
		\end{array}
	\right.
}
\title{}
\author{}
\date{}
\begin{document}
\title[Calculus Structure on Secondary Hochschild (co)homology]{Noncommutative Differential Calculus Structure on Secondary Hochschild (co)homology}

\author{Apurba Das}
\address{Department of Mathematics and Statistics,
Indian Institute of Technology, Kanpur 208016, Uttar Pradesh, India.}
\email{apurbadas348@gmail.com}

\author{Satyendra Kumar Mishra}\footnote{Corresponding Author [A2] email: satyamsr10@gmail.com.}
\author{Anita Naolekar}
\address{Statistics and Mathematics Unit,
Indian Statistical Institute, Bangalore- 560059, Karnataka, India.}
\email{satyamsr10@gmail.com, anita@isibang.ac.in}

\curraddr{}
\email{}

\subjclass[2010]{16E40.}
\keywords{Secondary Hochschild (co)homology; Gerstenhaber algebra; comp module; noncommutative differential calculus.}

\begin{abstract}
Let $B$ be a commutative algebra and $A$ be a $B$-algebra (determined by an algebra homomorphism $\varepsilon:B\rightarrow A$). M. D. Staic introduced a Hochschild like cohomology $H^{\bullet}((A,B,\varepsilon);A)$ called secondary Hochschild cohomology, to describe the non-trivial $B$-algebra deformations of $A$. J. Laubacher et al later obtained a natural construction of a new chain (and cochain) complex $\overline{C}_{\bullet}(A,B,\varepsilon)$ (resp. $\overline{C}^{\bullet}(A,B,\varepsilon)$) in the process of introducing the secondary cyclic (co)homology. It turns out that unlike the classical case of associative algebras (over a field), there exist different (co)chain complexes for the $B$-algebra $A$. In this paper, we establish a connection between the two (co)homology theories for $B$-algebra $A$. We show that the pair $\big(H^{\bullet}((A,B,\varepsilon);A),HH_{\bullet}(A,B,\varepsilon)\big)$ forms a noncommutative differential calculus, where $HH_{\bullet}(A,B,\varepsilon)$ denotes the homology of the complex $\overline{C}_{\bullet}(A,B,\varepsilon)$.

\end{abstract}

\noindent

\thispagestyle{empty}

\maketitle

%\tableofcontents

\vspace{0.2cm}

\section{Introduction}
Hochschild cohomology was introduced by G. Hochschild to study certain extensions of associative algebras. In 1963, a pioneer work of M. Gerstenhaber related Hochschild cohomology with the deformations of associative algebras \cite{gers}. He further proved that the Hochschild cohomology of an associative algebra carries a rich algebraic structure which is now known as {\it Gerstenhaber algebra} (see \cite{gers} for details). These structures appear in the context of the exterior algebra of Lie algebras, multivector fields on smooth manifolds, and differential forms on Poisson manifolds.  A Gerstenhaber algebra is a graded commutative associative algebra $(\mathcal{A} = \oplus_{i \in \mathbb{Z}} \mathcal{A}^i, \cup)$ together with a degree $-1$ graded Lie bracket $[-,-]$ on $\mathcal{A}$ satisfying the following compatibility condition.
\begin{align*}
[a, b \smile c] = [a, b] \smile c + (-1)^{(|a|-1) |b|} b \smile [a,c].
\end{align*}
A Batalin-Vilkovisky (BV-) operator on a Gerstenhaber algebra $(\mathcal{A} = \oplus_{i \in \mathbb{Z}} \mathcal{A}^i, \smile)$ is a square zero degree $-1$ map $\triangle : \mathcal{A} \rightarrow \mathcal{A}$ with
\begin{align*}
[a,b] = \pm ( \triangle (a \smile b) - (\triangle a) \smile b - (-1)^{|a|} a \smile (\triangle b)).
\end{align*}
 This shows that the bracket $[-,-]$ obstructs $\triangle$ to be a derivation with respect to the product $\smile$. A Gerstenhaber algebra with a BV-operator is called a Batalin-Vilkovisky algebra (BV-algebra). 
 
 %If $A$ is a unital associative algebra equipped with a symmetric, invariant and non-degenerate inner product, the the corresponding Hochschild cohomology $H^\bullet (A, A)$ is in fact a BV-algebra \cite{tradler}. The corresponding BV-operator is induced from the Connes' $B$-operator on the Hochschild cochains $C_\bullet (A, A)$  dualized and transferred to the Hochschild cochains $C^\bullet (A, A)$ via the inner product.

Let $A$ is an associative $k$-algebra, $B$ a commutative $k$-algebra and $\varepsilon : B \rightarrow A$ an algebra morphism with $\varepsilon (B) \subset \mathcal{Z}(A)$, the center of $A$. In 2016, M. Staic  \cite{Staic1,Staic2} introduced the secondary Hochschild cohomology $H^\bullet ((A, B, \varepsilon); M)$ of the triple $(A, B, \varepsilon)$ with coefficients in a $B$-symmetric $A$-bimodule $M$. This cohomology, motivated by an algebraic version of the
second Postnikov invariant \cite{Staic1}, controls the deformations of the $B$-algebra structures on $A[[t]]$. When $B$ is the underlying field $k$, the secondary Hochschild cohomology coincides with the classical Hochschild cohomology. Several results which are true for classical Hochschild cohomology theory for an associative algebra with coefficients in itself have analogues for the secondary Hochschild cohomology $H^\bullet ((A, B, \varepsilon), A)$ of a triple $(A, B, \varepsilon)$ with coefficients in $A$.  In particular, it is proved in \cite{Staic3} that the secondary complex $C^{\bullet}((A,B,\varepsilon);A )$ is a multiplicative non-symmetric operad, which induces a natural homotopy Gerstenhaber algebra structure on it. Consequently, one obtains a natural Gerstenhaber algebra structure on the secondary Hochschild cohomology $H^{\bullet}((A,B,\varepsilon);A )$.

%The secondary Hochschild cohomology $H^\bullet ((A, B, \varepsilon), A)$ of a triple $(A, B, \varepsilon)$ with coefficients in $A$ has several similar features to that of the classical Hochschild cohomology. 

%Following the idea of [Ref], we show that if $A$ is unital and has a symmetric, invariant and non-degenerate inner product, the secondary Hochschild cohomology $H^\bullet ((A, B, \varepsilon), A)$  carries a BV-operator to make it a BV-algebra.

%The secondary Hochschild cohomology groups,Hn..A;B; "/IM/, are associated to a B-algebra A (determined by "WB ! A) and an A-bimodule M which is B-symmetric. This construction was motivated by an algebraic version of the second Postnikov invariant, see \cite{Staic1} for details. The secondary Hochschild cohomology was used in \cite{Staic2} to study non-trivial $B$-algebra deformations of $A$. It is known that  

The simplicial structure of the complex $C^{\bullet}((A,B,\varepsilon);M)$ that defines the secondary cohomology $H^\bullet ((A, B, \varepsilon); M)$ is discussed in \cite{Staic4}. For this purpose, the authors introduce a simplicial module $\mathcal{B}(A, B, \varepsilon)$ over a certain simplicial algebra, and call it the {\it secondary bar simplicial module}. This simplicial module $\mathcal{B}(A, B, \varepsilon)$ is the analogue of the bar resolution associated with a k-algebra A, in Hochschild cohomology.
Using this simplicial module, the cohomology $H^{\bullet}((A,B,\varepsilon);M )$ of the triple $(A, B, \varepsilon)$ with coefficients in $M$ can be realized as the homology of the associated complex of co-simplicial module. This construction of the secondary bar simplicial module leads to natural constructions of the secondary Hochschild homology groups $H_\bullet((A, B, \varepsilon), M)$ of the triple $(A, B, \varepsilon)$ with coefficients in $M$. Also the authors in \cite{Staic4} have given natural constructions of secondary cohomology groups $HH^{\bullet}(A,B,\varepsilon)$ and the  secondary cyclic cohomology groups $HC^{\bullet}(A,B,\varepsilon)$ associated to a triple $(A,B,\varepsilon)$. This has also prompted the authors to define the corresponding homology groups $HH_{\bullet}(A,B,\varepsilon)$ and $HC_{\bullet}(A,B,\varepsilon)$ associated to the triple $(A, B, \varepsilon)$. We refer to \cite{Staic4} and \cite{BV-2020} for more Hochschild-like results on the secondary Hochschild and secondary cyclic (co)homology groups, associated to the triple $(A, B, \varepsilon)$.

%In \cite{Tradler}, Tradler showed that the Hochschild cohomology of a finite dimensional $k$-algebra $A$ equipped with a symmetric, non-degenerate, invariant bilinear form$\langle \cdot, \cdot \rangle: A \times A\longrightarrow k$ carries the structure of a BV-algebra. The operator $\Delta^\bullet: C^\bullet(A, A)\longrightarrow C^{\bullet-1}(A, A)$ on the Hochschild chains of an associative algebras $A$, inducing the BV structure on $H^*(A, A)$ corresponds to Connes' operator.

This article aims to establish a connection between the two (co)homology theories for a triple $(A,B,\varepsilon)$ introduced in \cite{Staic2} and \cite{Staic4}. We show that the pair $\big(H^{\bullet}((A,B,\varepsilon);A),HH_{\bullet}(A,B,\varepsilon)\big)$ forms a noncommutative differential calculus. A calculus $(\mathcal{A},\Omega)$ consists of a Gerstenhaber algebra $\mathcal{A}$ and a graded space $\Omega$ such that 
\begin{itemize}
\item $\Omega$ carries a Gerstenhaber module structure over $\mathcal{A}$, and
\item there exists a differential $B:\Omega_{\bullet}\rightarrow \Omega_{\bullet+1}$ satisfying the Cartan-Rinehart homotopy formula (see Definition \ref{BV module}).
\end{itemize} 
In differential geometry and noncommutative geometry, there are several concrete examples of noncommutative  differential calculi \cite{gelfand-daletskii-tsygan,Nest, Tamarkin}. These examples include the classical calculus of multivector fields and differential forms, a calculus on the pair Hochschild cohomology and  Hochschild homology of associative algebras, the calculus for Poisson structures, and the calculus for general left Hopf algebroids with respect to general coefficients (see Section $6$, \cite{Kowalzig}). In \cite{Kowalzig} N. Kowalzig proved that if a comp module structure \cite{KoKr} is cyclic over a multiplicative operad, then this cyclic comp module structure induces a noncommutative differential calculus (in the sense of \cite{Nest, Tamarkin}) on the pair of the associated homology of the cyclic $k$-module and the cohomology of the operad. 

In this article, we consider the multiplicative non-symmetric operad structure on secondary cochain complex $C^{\bullet}((A,B,\varepsilon);A)$ from \cite{Staic3}. The secondary Hochschild chain complex associated to a triple $(A,B,\varepsilon)$ is denoted by $\overline{C}_{\bullet}(A,B,\varepsilon)$. We define comp module actions of the operad $C^{\bullet}((A,B,\varepsilon);A)$ on the complex $\overline{C}_{\bullet}(A,B,\varepsilon)$. With these actions, we prove that the complex $\overline{C}_{\bullet}(A,B,\varepsilon)$ is a cyclic comp module over the operad $C^{\bullet}((A,B,\varepsilon);A)$. Subsequently, it follows that the pair of underlying homologies $\big(H^{\bullet}((A,B,\varepsilon);A),HH_{\bullet}(A,B,\varepsilon)\big)$ forms a (noncommutative) differential calculus.

In Section $2$, we recall the definitions and results related to multiplicative non-symmetric operads and cyclic unital comp module structures over multiplicative operads. In Section $3$, we recall secondary Hochschild (co)homology for a triple $(A,B,\varepsilon)$. In particular, we recall the definitions of the complexes $C^{\bullet}((A,B,\varepsilon);A)$ and $\overline{C}_{\bullet}(A,B,\varepsilon)$ from \cite{Staic2, Staic4}. We also recall the multiplicative operad structure on the complex $C^{\bullet}((A,B,\varepsilon);A)$ from \cite{Staic3}. In Section $4$, we define comp module actions of $C^{\bullet}((A,B,\varepsilon);A)$ on the complex $\overline{C}_{\bullet}(A,B,\varepsilon)$ and obtain a comp module structure on $\overline{C}_{\bullet}(A,B,\varepsilon)$. We further consider an operator $\mathfrak{t}:\overline{C}_{\bullet}(A,B,\varepsilon)\rightarrow \overline{C}_{\bullet}(A,B,\varepsilon)$ to show that the comp module structure is cyclic. Finally, we conclude that there exists a (noncommutative) differential calculus on the pair $\big(H^{\bullet}((A,B,\varepsilon);A),HH_{\bullet}(A,B,\varepsilon)\big)$. This provides a connection between the cohomology groups of the triple $(A, B, \varepsilon)$ and the homology groups associated to the triple $(A, B, \varepsilon)$.
%For thiswe introduce the notion of a simplicial module over a simplicial algebra. The main example is an object $\mathcal{B}(A,B,\varepsilon)$ that we call the Secondary Bar Simplicial Module. In many ways $\mathcal{B}(A,B,\varepsilon)$ behaves like the bar resolution associated to a $k$-algebra $A$. Using $\mathcal{B}(A,B,\varepsilon)$ we get a presentation of $C^{\bullet}((A,B,\varepsilon);M)$ as the cohomology of the complex associated to a co-simplicial $k$-module $Hom_{\mathcal{A}(A,B,\varepsilon)}\big(\mathcal{B}(A,B,\varepsilon),\mathcal{C}(M))$. 

\section{Comp modules over (non-symmetric) operads}

In this section, we recall the notion of cyclic comp modules over operads and the related results from \cite{Kowalzig}. In particular, we recall that a cyclic comp module structure over a multiplicative operad induces a noncommutative differential calculus. 
\begin{defn}
A non-symmetric (unital) operad $\mathcal{O}$ in the category of $k$-modules is a sequence of $k$-modules $\{\mathcal{O}^{n}\}_{n\geq 0}$ equipped with $k$-linear maps
$$\circ_i:\mathcal{O}^n\otimes \mathcal{O}^m\rightarrow \mathcal{O}^{n+m-1},\quad \mbox{for}~~ m,n\geq 0 \mbox{~~and } 0\leq i\leq n,$$ 
and an element $\textbf{1}\in \mathcal{O}^1$ such that the following identities hold
$$f\circ_i g=0 \quad \quad\mbox{if } n<i \quad\mbox{ or \quad}  n=0,$$
$$(f\circ_i g)\circ_j h = \threepartdef
{(f\circ_j h)\circ_{i+p-1} g}      {j<i,}
{f\circ_i (g\circ_{j-i+1} h)}      {i\leq j< m+i,}
{(f\circ_{j-m+1} h)\circ_i f} {j\geq m+i,}$$
and $$\quad f\circ_i \textbf{1}=f =\textbf{1}\circ_1 f,\quad \quad\mbox{\quad \quad \quad for all  } i\leq n,$$
where $f\in \mathcal{O}^n$, $g\in \mathcal{O}^m$, and $h\in \mathcal{O}^p$. 
\end{defn}
If $\mathcal{O}$ is an operad, then one can define a circle product $\circ: \mathcal{O}^n \otimes \mathcal{O}^{m}\rightarrow \mathcal{O}^{n+m-1}$ by $$f\circ g = \sum_{i=1}^n (-1)^{(i-1)(m-1)}f\circ_i g,\quad \mbox{for~~} f\in \mathcal{O}^{n} \mbox{~~and~~} g\in \mathcal{O}^{m}.$$ 
Subsequently, a degree $-1$ bracket is given by
$$[f,g]=f\circ g-(-1)^{(n-1)(m-1)}g\circ f,\quad \mbox{for~~} f\in \mathcal{O}^{n} \mbox{~~and~~} g\in \mathcal{O}^{m}.$$ 

We call the operad $\mathcal{O}$, a multiplicative operad if there exists an element $\mu\in \mathcal{O}^2$ and an element $e\in \mathcal{O}^0$ such that $\mu\circ_1 \mu=\mu\circ_2\mu$ and $\mu\circ_1 e=\textbf{1}=\mu\circ_2 e$. The multiplication $\mu$ on the operad $\mathcal{O}$ induces a differential $\delta_\mu:\mathcal{O}^n\rightarrow \mathcal{O}^{n+1},$ given by $\delta_\mu(f)=[\mu,f]$ for $f\in \mathcal{O}^n$. Let us denote by $H^{\bullet}_{\mu}(\mathcal{O})$, the cohomology space of the complex $(\mathcal{O},\delta_\mu)$. The multiplication $\mu$ also induces a cup product $\smile: \mathcal{O}^n \otimes \mathcal{O}^{m}\rightarrow \mathcal{O}^{n+m}$ on the complex $\mathcal{O}$, which is given by 
$$f\smile g = (\mu\circ_2 f)\circ_1 g, \quad \mbox{for~~} f\in \mathcal{O}^{m} \mbox{~~and~~} g\in \mathcal{O}^{n}.$$
The graded commutative product $\smile$ and the Gerstenhaber bracket $[~,~]$ induce a Gerstenhaber algebra structure on the cohomology $H^{\bullet}_{\mu}(\mathcal{O})$ (see \cite{gers-voro} for more details).

\begin{defn}
A unital (left) comp module $\mathcal{M}_{\tiny{\bullet}}$ over an operad $\mathcal{O}^{\bullet}:=\{\mathcal{O}^n\}_{n\geq 0}$ is a sequence of $k$-modules $\{\mathcal{M}_n\}_{n\geq 0}$ equipped with comp module maps
$\bullet_i:\mathcal{O}^m\otimes \mathcal{M}_p\rightarrow \mathcal{M}_{p-m+1},$
for $1\leq i\leq p-m+1$ and $0\leq m\leq p$ such that 
\begin{equation}\label{con1}
f\bullet_i (g\bullet_j x) = \threepartdef
{g\bullet_j (f\bullet_{i+n-1} x)}      {j<i,}
{(f\circ_{j-i+1} g)\bullet_i x}      {j-m<i\leq j,}
{g\bullet_{j-m+1} (f\bullet_i x)} {1\leq i\leq j-m,}
\end{equation} 
\begin{equation}\label{unital}
\textbf{1}\bullet_i x=x \quad\mbox { for } 1\leq i \leq p, 
\end{equation}
where $f\in \mathcal{O}^m,$ $g\in \mathcal{O}^n$, and $x\in \mathcal{M}_p,$ for $m\geq 0,~ n,p\geq 0$. The maps $\bullet_i$ are zero, for $m>p$.
\end{defn}

\begin{defn}
A unital comp module $\mathcal{M}_{\bullet}$ is called para-cyclic if it is additionally equipped with 
\begin{enumerate}[(i)]
\item an extra comp module maps $\bullet_0:\mathcal{O}^m\otimes \mathcal{M}_p\rightarrow \mathcal{M}_{p-m+1},$
for $0\leq m\leq p+1$ (the maps $\bullet_0$ are assumed to be zero, for $m>p+1$) such that the relations \eqref{con1}-\eqref{unital} also hold true for $i=0$, and

\item a $k$-linear map $\mathfrak{t}:\mathcal{M}_p\rightarrow \mathcal{M}_p,$ for $p\geq 1,$ such that 
$$\mathfrak{t}(f\bullet_i x)=f\bullet_{i+1}\mathfrak{t}(x),$$
for $f\in \mathcal{O}^m$, $x\in \mathcal{M}_p$, and $0\leq i\leq p-m$.
If the map $\mathfrak{t}:\mathcal{M}_p\rightarrow \mathcal{M}_p$ satisfies the condition: $\mathfrak{t}^{p+1}=Id$, then the para-cyclic comp module $\mathcal{M}_{\bullet}$ is said to be `\textbf{cyclic}' comp module over the operad $\mathcal{O}^{\bullet}$. 
 \end{enumerate}
\end{defn}

Let $(\mathcal{O}^{\bullet},\mu)$ be a multiplicative non-symmetric operad and $\mathcal{M}_{\bullet}$ be a cyclic comp module over $\mathcal{O}^{\bullet}$. Then let us recall from \cite{Kowalzig} that there is a cyclic $k$-module structure on $\mathcal{M}_{\bullet}$ with cyclic operator $\mathfrak{t}:\mathcal{M}_p\rightarrow \mathcal{M}_p$, face maps $d_i:\mathcal{M}_p\rightarrow \mathcal{M}_{p-1}$ and degeneracies $s_j:\mathcal{M}_p\rightarrow \mathcal{M}_{p+1}$ defined as follows
\begin{align*}
d_i(x)=~&\mu \bullet_i x, \quad \quad\mbox{for}~i=0,\ldots,p-1,\\
d_p(x)=~&\mu \bullet_0 \mathfrak{t}(x),\\
s_j(x)=~&\textbf{1} \bullet_{j+1} x, \quad \quad\mbox{for}~j=0,\ldots,p,
\end{align*}
where $x\in \mathcal{M}_p$. Thus, a simplicial boundary $\mathfrak{b}$, a norm operator $N$, an extra degeneracy $s_{-1}$, and the cyclic differential $B$ are defined as follows 
\begin{equation*}
\mathfrak{b}:= \sum\limits_{i=0}^{p}(-1)^i d_i,\quad N:=\sum\limits_{i=0}^{p}(-1)^{ip} \mathfrak{t}^i,\quad s_{-1}:=\mathfrak{t}s_p,\quad B:=(id-\mathfrak{t})s_{-1}N.
\end{equation*}

The normalised complex $\mathfrak{N}(\mathcal{M})_{\bullet}$ is the quotient of the complex $\mathcal{M}_{\bullet}$ by the subcomplex spanned by the images of degeneracy maps $s_j,$ for $j=0,1,\ldots,p$. The map $B$ on the normalised complex $\mathfrak{N}(\mathcal{M})_{\bullet}$ is given by
\begin{equation}\label{Norm-Operator}
B(x)=s_{-1}N(x)=\sum_{i=0}^p (-1)^{ip}~ \textbf{1}\bullet_0 \mathfrak{t}^i (x),\quad \mbox{for  }~~x\in \mathfrak{N}(\mathcal{M})_{p}.
\end{equation}

\subsection{The noncommutative differential calculus associated to a cyclic comp module over a multiplicative operad}

\begin{defn}[\cite{Kowalzig}]\label{BV module}
A graded $k$-module $\Omega:=\oplus \Omega_n$ is called a Gerstenhaber module over Gerstenhaber algebra 
$\mathcal{A}$ if there exist maps $\mathfrak{i}:\mathcal{A}_m\otimes \Omega_p\rightarrow \Omega_{p-m}$, and $\mathcal{L}: \mathcal{A}_m\otimes \Omega_p\rightarrow \Omega_{p-m+1}$ such that 
\begin{enumerate}[(i)]
\item the action $\mathfrak{i}$ makes $\Omega$ a graded module over the graded commutative associative algebra $(\mathcal{A},\wedge)$;
\item the action $\mathcal{L}$ makes $\Omega$ a graded Lie module over the graded Lie algebra $(\mathcal{A}[1],[-,-])$;
\item for any $X\in \mathcal{A}_m$ and $Y\in \mathcal{A}_{n+1}$, the following relation holds
$$\mathfrak{i}_{[X,Y]}=\mathfrak{i}_{X}\mathcal{L}_Y-(-1)^{mn}\mathcal{L}_{Y}\mathfrak{i}_X .$$
\end{enumerate}
The Gerstenhaber module $\Omega=\oplus \Omega_n$ is called a Batalin-Vilkovisky module if there exists a $k$-linear map $B:\Omega_n\rightarrow \Omega_{n+1}$ such that $B^2=0$, and it satisfies the following Cartan-Rinehart homotopy formula
$$\mathcal{L}_{X}=B\circ\mathfrak{i}_X-(-1)^m \mathfrak{i}_{X}\circ B, \quad\mbox{for  ~}X\in \mathcal{A}_m.$$
A pair $(\mathcal{A},\Omega)$, where $\mathcal{A}$ is a Gerstenhaber algebra and $\Omega$ is a Batalin-Vilkovisky module over $\mathcal{A}$, is called a \textbf{noncommutative differential calculus}.
\end{defn}

Let $(\mathcal{O},\mu)$ be a multiplicative operad and $\mathcal{M}_{\bullet}$ be a cyclic unital comp module over $(\mathcal{O},\mu)$. 
 
\begin{defn}[\cite{Kowalzig}]
For ${f}\in \mathcal{O}^m$, the cap product and the Lie derivative are given as follows.

\begin{enumerate}[(i)]
\item The cap product $i_{f}: \mathcal{M}_p\rightarrow \mathcal{M}_{p-m}$ is defined by
$$i_{f}x= (\mu\circ_2 f)\bullet_0 x, \quad \quad\mbox{for~~} x\in \mathcal{M}_p.$$
\item The Lie derivative $\mathcal{L}_f :\mathcal{M}_p\rightarrow\mathcal{M}_{p-m+1}$ of $x\in \mathcal{M}_p$ along an element $f\in \mathcal{O}^m$ is defined by 
\begin{equation*}
L_{f}(x)= \threepartdef
{\sum\limits_{i=1}^{n-p+1}(-1)^{(p-1)(i-1)}{f}\bullet_i x+\sum\limits_{i=1}^{p}(-1)^{n(i-1)+p-1}{f}\bullet_0 \mathfrak{t}^{(i-1)}(x)}      {m<p+1,}
{(-1)^{p-1}{f}\bullet_0 N(x)}      {m=p+1,}
{0} {m>p+1.}
\end{equation*} 
\end{enumerate}
\end{defn}
It is shown in \cite{Kowalzig} that the cap product and the Lie derivative satisfy the following identities.
\begin{equation}\label{descend}
i_{\delta_\mu f}=\mathfrak{b} \circ i_{f }- (-1)^m i_{f } \circ \mathfrak{b},\quad\quad [\mathfrak{b}, \mathcal{L}_{f }]+ \mathcal{L}_{\delta_\mu {f}}=0.
\end{equation}
\begin{equation}\label{graded module}
i_{f}i_{g}=i_{f\smile g}, \quad\quad [\mathcal{L}_{f},\mathcal{L}_{g}]=\mathcal{L}_{[f, g]}.
\end{equation}
From equation \eqref{descend}, it follows that the cap product and Lie derivative descend to well-defined operators on the homology $H_{\bullet}(\mathcal{M}_{\bullet})$. Moreover, from equation \eqref{graded module}, the cap product makes $H_{\bullet}(\mathcal{M}_{\bullet})$ a graded module over the algebra $(H^{\bullet}(\mathcal{O}^{\bullet}),\smile)$, and the Lie derivative makes $H_{\bullet}(\mathcal{M}_{\bullet})$ a graded Lie module over the graded Lie algebra $(H^{\bullet+1}(\mathcal{O}^{\bullet}),[~,~])$. For any two cocycles $f\in \mathcal{O}^m$ and $g\in \mathcal{O}^n$, the induced operators $$\mathcal{L}_{f}:H_\bullet(\mathcal{M}_{\bullet})\rightarrow H_{\bullet-m+1}(\mathcal{M}_{\bullet})\quad \mbox{and}\quad i_g :H_\bullet(\mathcal{M}_{\bullet})\rightarrow H_{\bullet-n}(\mathcal{M}_{\bullet})$$ satisfy the relation
\begin{equation}\label{precalculus}
[i_{f},\mathcal{L}_g]=i_{[f,g]}.
\end{equation}
The identity \eqref{precalculus} shows that the homology $H_{\bullet}(\mathcal{M})$ is a Gerstenhaber module over the Gerstenhaber algebra $H_{\mu}^{\bullet}(\mathcal{O})$. 

In fact, this Gerstenhaber module structure extends to a Batalin-Vilkovisky module structure. Let us consider the normalised complexes $\mathfrak{N}(\mathcal{M})_{\bullet}$ and $\mathfrak{N}(\mathcal{O})^{\bullet}$. Also, recall that (co)homology of the normalised (co)chain complex is the same as the (co)homology of the original complex. The induced norm operator on the normalised complex $\mathfrak{N}(\mathcal{M})_{\bullet}$ induces a well defined $k$-linear map $B:H_{\bullet}(\mathcal{M})\rightarrow H_{\bullet+1}(\mathcal{M})$ satisfying $B^2=0$. For any $m$-cocycle $f\in\mathfrak{N}(\mathcal{O})^{m}$, the operators $$\mathcal{L}_{f}:H_\bullet(\mathcal{M})\rightarrow H_{\bullet-m+1}(\mathcal{M})\quad \mbox{and}\quad i_f :H_\bullet(\mathcal{M})\rightarrow H_{\bullet-m}(\mathcal{M})$$
 satisfy the following Cartan-Rinehart homotopy formula
$$\mathcal{L}_f=[B,i_f]:=B\circ i_f -(-1)^m i_f\circ B.$$
i.e., $H_{\bullet}(\mathcal{M})$ is a Batalin-Vilkovisky module over the Gerstenhaber algebra $H_{\mu}^{\bullet}(\mathcal{O})$. Thus, the pair $\big(H_{\mu}^{\bullet}(\mathcal{O}),H_{\bullet}(\mathcal{M})\big)$ forms a noncommutative differential calculus (see \cite{Kowalzig} for more details).

%\begin{defn}
%A Gerstenhaber algebra is a triple  $(\mathcal{A}=\oplus_{i\geq 1}\mathcal{A}_i,\wedge,[-,-])$, where $(\mathcal{A},\wedge)$ is a graded commutative associative $k$-algebra, and $[-,-]:\mathcal{A}\otimes \mathcal{A}\rightarrow \mathcal{A}$ is a bilinear map of degree $-1$ such that:
%\begin{enumerate}
%\item $(\mathcal{A}[1],[-,-])$ is a graded Lie algebra.
%\item The following Leibniz rule holds:
 %     $$[X,Y\wedge Z]=[X,Y]\wedge Z+(-1)^{(i-1)j}Y\wedge [X,Z], $$
  %    for all $X\in \mathcal{A}_i,~Y\in \mathcal{A}_j,~Z\in \mathcal{A}_k$.
%\end{enumerate}
%\end{defn}

\section{Secondary Hochschild (co)homology}
Let $A$ be an associative $k$-algebra and $B$ be a commutative $k$-algebra. Suppose that there is a $k$-algebra morphism $\varepsilon : B \rightarrow A$ such that $\varepsilon (B)  \subset \mathcal{Z}(A)$ the center of $A$. We denote the above structure by a triple $(A, B, \varepsilon)$. In this section, we recall the notion of secondary Hochschild (co)homology for a triple $(A, B, \varepsilon)$ introduced in \cite{Staic2, Staic3, Staic4}. 

\subsection{Secondary Hochschild (co)homology of the triple $(A,B,\varepsilon)$ with coefficients in $A$} Let $(A, B, \varepsilon)$ be a triple. With the above notations, the secondary Hochschild cohomology of a triple $(A, B, \varepsilon)$ with coefficients in $A$ is given by considering the cochain complex $C^{\bullet}((A,B,\varepsilon);A)=\oplus_{n\geq 0}C^{n}((A,B, \varepsilon); A)$, where
\begin{equation*}
C^{n}((A,B, \varepsilon); A):=  Hom_k (A^{\otimes n} \otimes B^{\otimes \frac{n(n-1)}{2}} , A),\quad\mbox{for } n\geq 0,
\end{equation*}
and the differential $\delta^{\varepsilon}: C^{n-1}((A,B, \varepsilon); A) \rightarrow C^{n}((A,B, \varepsilon); A)$ is defined by

\begin{align*}
\delta^\varepsilon(f)\left(\otimes \left( {\begin{array}{ccccc}
a_1   & b_{1,2}   & \cdots   & b_{1,n-1}   & b_{1,n}\\
1       & a_2      & \cdots   & b_{2, n-1}  & b_{2,n} \\
\cdot  & \cdot    & \cdots   & \cdot     & \cdot\\
1       & 1         & \cdots   &  a_{n-1}    & b_{n-1, n}\\
1       & 1         & \cdots   & 1           & a_n
\end{array} } \right)\right):=a_1 \varepsilon (b_{1,2}\cdots b_{1, n}) f \left(\otimes \left( {\begin{array}{ccccc}
a_2    & b_{2,3}   & \cdots   & b_{2,n}\\
1       & a_{3}    & \cdots   & b_{3,n} \\
\cdot  & \cdot    & \cdots   & \cdot    \\
1       & 1         & \cdots         & a_n
\end{array} } \right)\right)+
\end{align*}
\begin{align*}
& \sum_{i=1}^{n-1}(-1)^i f\left(\otimes \left( {\begin{array}{ccccccc}
a_1  & b_{1,2}& \cdots &  b_{1, i} b_{1, i+1}  & \cdots  & b_{1,n-1}& b_{1,n} \\
1 & a_2 & \cdots & b_{2, i} b_{2, i+1} &  \cdots& b_{2,n-1}& b_{2, n} \\
\cdot& \cdot &\cdots &\cdot& \cdots &\cdot& \cdot\\
1 &  1 & \cdots & a_i a_{i+1}\varepsilon (b_{i, i+1}) & \cdots & b_{i, n-1} b_{i+1, n-1} & b_{i,n} b_{i+1,n} \\
\cdot & \cdot & \cdots & \cdot & \cdots & \cdot & \cdot \\
1 & 1 & \cdots &1 & \cdots &a_{n-1} & b_{n-1, n} \\
1 & 1 & \cdots & 1 & \cdots & 1 &  a_{n}
\end{array} } \right)\right)+\\
&(-1)^n f \left(\otimes \left( {\begin{array}{ccccc}
a_1   & b_{1,2}   & \cdots   & b_{1,n-1}\\
1       & a_{2}    & \cdots   & b_{2,n-1} \\
\cdot  & \cdot    & \cdots   & \cdot    \\
1      & 1      & \cdots  & a_{n-1}
\end{array}} \right)\right)\varepsilon (b_{1,n} \cdots b_{n-1, n}) a_{n}.
\end{align*}

The cohomology of this cochain complex is denoted by  $H^\bullet ((A, B , \varepsilon); A)$ and it is called the secondary Hochschild cohomology of the triple $(A, B, \varepsilon)$ with coefficients in $A$.

In the present article, we do not use the notion of secondary Hochschild homology $H_\bullet ((A, B, \varepsilon) ; A)$ of the triple $(A, B, \varepsilon)$ with coefficients in $A$. So, we skip the definition of $H_\bullet ((A, B, \varepsilon) ; A)$ (see \cite{Staic1} for details).

\begin{remark}
If $B = k$ and $\varepsilon : k \rightarrow A$ defining the $k$-algebra structure on $A$, the secondary Hochschild (co)homology coincides with the classical Hochschild (co)homology of the associative algebra $A$.
\end{remark}

\subsection{Gerstenhaber algebra structure on the secondary Hochschild cohomology}
Let us consider an operad $\{\mathcal{O}^n\}_{n\geq 0}$, where 
$$\mathcal{O}^n= C^{n}((A,B,\varepsilon);A).$$
The underlying $k$-bilinear maps 
\begin{align*}
\circ_i : C^n  ((A, B , \varepsilon); A) \otimes C^m  ((A, B , \varepsilon); A) \rightarrow C^{n+m-1}  ((A, B , \varepsilon); A),\quad \mbox{for}~~ n, m \geq 0 \mbox{  and }1 \leq i \leq n,
\end{align*}
are given as follows 
\begin{align}\label{comp products}
&(f\circ_i g)(T^1_{n+m-1})\\\nonumber
=& f\left(\otimes \left( {\begin{array}{ccccccccc}
a_1 &\cdots    & b_{1,i-1} & \prod\limits_{j= i}^{m+i-1}b_{1,j} & b_{1,m+i}&\cdots   & b_{1,n+m-1} \\
1   &\cdots    & b_{2,i-1} & \prod\limits_{j= i}^{m+i-1}b_{2,j} & b_{2,m+i}&  \cdots   & b_{2, n+m-1}\\
\cdot  & \cdot &\cdot   & \cdot & \cdot &\cdot  & \cdot   \\
1 &\cdots    & a_{i-1} & \prod\limits_{j= i}^{m+i-1}b_{i-1,j} & b_{i-1,m+i}&\cdots   & b_{i-1,n+m-1} \\
1 &\cdots    & 1 & g(T^{i}_{m+i-1})&\prod\limits_{j= i}^{m+i-1}b_{j,m+i}&\cdots   & \prod\limits_{j= i}^{m+i-1}b_{j,n+m-1} \\
1   &\cdots    & 1 & 1 & a_{m+i}&  \cdots   & b_{m+i, n+m-1}\\
\cdot  & \cdot &\cdot   & \cdot & \cdot &\cdot  & \cdot   \\
1   &\cdots    & 1 & 1 & 1 &  \cdots   &  a_{n+m-1}  
\end{array} } \right)\right), 
\end{align}
where
\begin{align*}
T^i_k:=\otimes \left( {\begin{array}{ccccc}
a_i   & b_{i,1}   & \cdots   & b_{i,k-1}   & b_{i,k}\\
1       & a_{i+1}      & \cdots   & b_{i+1, k-1}  & b_{i+1,k} \\
\cdot  & \cdot    & \cdots   & \cdot     & \cdot\\
1       & 1         & \cdots   &  a_{k-1}    & b_{k-1, k}\\
1       & 1         & \cdots   & 1           & a_k
\end{array} }\right)
\end{align*}

Moreover, consider an element $\mu\in \mathcal{O}^2=C^2((A,B,\varepsilon);A)$ given by
\begin{equation}\label{mult}
\mu\left(\otimes\left(\begin{array}{cc}
a_1 & b\\
1  & a_2
\end{array}
\right)\right)=\varepsilon(b)a_1 a_2.
\end{equation}
Then, it follows that $\{\mathcal{O}^n\}_{n\geq 0}$ is a multiplicative non-symmetric operad with the multiplication $\mu\in \mathcal{O}^2$ and $1\in \mathcal{O}^0=A$ (see \cite{Staic3} for more details).

The precise construction of Gerstenhaber algebra structure on $H^{\bullet}((A,B,\varepsilon);A)$ is described as follows: the pre-Lie bracket $\circ$ is given by
\begin{align*}
f \circ g = \sum_{i=1}^n (-1)^{(i-1)(m-1)} f \circ_i g, ~~~~ \text{ for } f \in C^n, ~ g \in C^m;
\end{align*}

\noindent the graded Lie algebra structure on $ \oplus_{n \geq 1} C^n  ((A, B , \varepsilon); A)$ is given by
\begin{align*}
[f,g] = f \circ g - (-1)^{(n-1)(m-1)} g \circ f, \quad \quad\text{ for } f \in C^n, ~ g \in C^m;
\end{align*}
the cup-product on $C^\bullet  ((A, B , \varepsilon); A)$ is defined by
\begin{align*}
(f \smile g) \left( \otimes \left( {\begin{array}{ccccc}
a_1     & b_{1,2}   & \cdots   & b_{1,n+m} \\
1       & a_2       & \cdots   & b_{2, n+m}\\
\cdot  & \cdot    & \cdot   & \cdot    \\
1       & 1         & \cdots   &  a_{m+n}  
\end{array} } \right) \right) = \quad\quad\prod_{i=1}^m ~~\prod_{j=m+1}^{m+n} \quad\quad\varepsilon (b_{i,j})   g (T^1_m) f (T^{m+1}_{m+n}). 
\end{align*}
The induced operations on the secondary Hochschild cohomology $H^\bullet ((A, B, \varepsilon); A)$ make it a Gerstenhaber algebra.

\subsection{Secondary Hochschild (co)homology associated to the triple $(A,B,\varepsilon)$}

We recall the notion of secondary Hochschild homology associated to the triple $(A,B,\varepsilon)$ from \cite{Staic1}. Let us consider the chain complex $\big(\overline{C}_{\bullet}(A,B,\varepsilon):=\oplus_{n\geq 0}\overline{C}_{n}(A,B,\varepsilon), \partial\big)$, where
 $$\overline{C}_{n}(A,B,\varepsilon):=A^{\otimes (n+1)}\otimes B^{\frac{n(n+1)}{2}}, \quad n\geq 0$$
and the differential $\partial:\overline{C}_{n}\rightarrow \overline{C}_{n-1}$ is defined by

\begin{align*}
&\partial\left(\otimes \left( {\begin{array}{ccccc}
a_0   & b_{0,1}   & \cdots   & b_{0,n-1}   & b_{0,n}\\
1       & a_1       & \cdots   & b_{1, n-1}  & b_{1,n} \\
\cdot  & \cdot    & \cdot   & \cdot      & \cdot\\
1       & 1         & \cdots   &  a_{n-1}    & b_{n-1, n}\\
1       & 1         & \cdots   & 1           & a_n
\end{array} } \right)\right)\\
:=& \sum_{i=0}^{n-1}(-1)^i~~\left(\otimes \left( {\begin{array}{ccccccc}
a_0  & \cdots & b_{0, i-1} &  b_{0, i} b_{0, i+1} & b_{0, i+2} & \cdots  & b_{0,n} \\
\cdot  & \cdot    & \cdot   & \cdot      & \cdot    &    \cdot    &  \cdot \\
1 & \cdots & a_{i-1} & b_{i-1, i} b_{i-1, i+1} & b_{i-1} b_{i+2} & \cdots & b_{i-1, n} \\
1 & \cdots & 1 & \varepsilon (b_{i, i+1}) a_i a_{i+1} & b_{i, i+2} b_{i+1, i+2} & \cdots & b_{i,n} b_{i+1,n} \\
1& \cdots & 1 & 1 & a_{i+2} & \cdots & b_{i+2, n} \\
\cdot & \cdot & \cdot & \cdot & \cdot & \cdot & \cdot \\
1 & \cdots & 1 & 1 & 1 & \cdots & a_{n}
\end{array} } \right)\right)\\
&+(-1)^n\left(\otimes \left( {\begin{array}{ccccccc}
\varepsilon(b_{0,n}) a_n a_0   & b_{1,n}b_{0,1}&\cdots & b_{i, n}b_{0, i}  &  \cdots  & b_{n-1,n}b_{0,n-1} \\
1& a_1 &\cdots & b_{1,i}& \cdots & b_{1,n-1}\\
\cdot  & \cdot    & \cdot   & \cdot   &\cdot   & \cdot \\
1 & \cdots & \cdots & a_{i}  &  \cdots & b_{i, n-1} \\
\cdot & \cdot & \cdot & \cdot  & \cdot & \cdot \\
1 & \cdots & \cdots & 1 & \cdots & a_{n-1}
\end{array} } \right)\right).
\end{align*}
The homology of the complex $\big(\overline{C}_{\bullet}(A,B,\varepsilon), \partial\big)$ is called the secondary Hochschild homology associated to the triple $(A,B,\varepsilon)$ and denoted by $HH_{\bullet}(A,B,\varepsilon)$. The homology $HH_{\bullet}(A,B,\varepsilon)$ is different from the notion of secondary Hochschild homology of the triple $(A,B,\varepsilon)$ with coefficients in the module $A$. For the definition of $HH^{\bullet}(A,B,\varepsilon)$, the secondary Hochschild cohomology associated to a triple $(A,B,\varepsilon)$, one should see the detailed construction in Section $4$ of \cite{Staic4}. Here, we skip the definition of $HH^{\bullet}(A,B,\varepsilon)$ since we do not use it in the present article.

\section{Calculus structure on secondary Hochschild (co)homology}
In this section, we define comp module action of $\mathcal{O}^{\bullet}=C^{\bullet}((A,B,\varepsilon);A)$ on the secondary Hochschild complex $\overline{C}_{\bullet}(A,B,\varepsilon)$. We prove that this comp module action makes $\overline{C}_{\bullet}(A,B,\varepsilon)$ a cyclic comp module over the operad $C^{\bullet}((A,B,\varepsilon);A)$. We conclude that $\big(H^{\bullet}((A,B,\varepsilon);A),HH_{\bullet}(A,B,\varepsilon)\big)$ forms a (noncommutative) differential calculus.

\subsection{Cyclic comp module structure on $\overline{C}_{\bullet}(A,B,\varepsilon)$}

We denote $\mathcal{M}_n:=\overline{C}_n(A,B,\varepsilon),$ for $n\geq 0$. Let us define $k$-linear maps $\bullet_i:\mathcal{O}^m\otimes \mathcal{M}_p\rightarrow \mathcal{M}_{p-m+1}$ as follows
\begin{align}\label{comp module maps}
&{f}\bullet_i\left(\otimes \left( {\begin{array}{ccccc}
a_0   & b_{0,1}   & \cdots   & b_{0,p-1}   & b_{0,p}\\
1       & a_1       & \cdots   & b_{1, p-1}  & b_{1,p} \\
\cdot  & \cdot     & \cdot    & \cdot       & \cdot \\
1        & 1         & \cdots   &  a_{p-1}    & b_{p-1, p}\\
1       & 1         & \cdots   & 1           & a_p
\end{array} } \right)\right)\\\nonumber
=&\otimes \left( {\begin{array}{ccccccccc}
a_0 &\cdots    & b_{0,i-1} & \prod\limits_{j=i}^{m+i-1}b_{0,j} & b_{0,m+i}&\cdots   & b_{0,p} \\
1   &\cdots    & b_{1,i-1} & \prod\limits_{j=i}^{m+i-1}b_{1,j} & b_{1,m+i}&  \cdots   & b_{1, p}\\
\cdot  & \cdots &\cdot   & \cdot & \cdot &\cdots  & \cdot   \\
1 &\cdots    & a_{i-1} & \prod\limits_{j=i}^{m+i-1}b_{i-1,j} & b_{i-1,m+i}&\cdots   & b_{i-1,p} \\
1 &\cdots    & 1 & {f\big(T^{i}_{m+i-1}\big)} &\prod\limits_{j=i}^{m+i-1}b_{j,m+i}&\cdots   & \prod\limits_{j=i}^{m+i-1}b_{j,p} \\
1   &\cdots    & 1 & 1 & a_{m+i}&  \cdots   & b_{m+i, p}\\
\cdot  & \cdots &\cdot   & \cdot & \cdot &\cdots  & \cdot   \\
1   &\cdots    & 1 & 1 & 1 &  \cdots   &  a_{p}  
\end{array} } \right),
\end{align}
for $f\in \mathcal{O}^m,$ $0\leq m\leq p$, and $0\leq i\leq p-m+1$. %Here, for $0\leq s<t\leq p$ the submatrix ${T_t^s}$ is given as follows:
%$${T^{s}_{t}}:=\otimes \left( {\begin{array}{ccccc}
%a_s   & b_{s,1}   & \cdots   & b_{s,t-1}   & b_{s,t}\\
%1       & a_{s+1}       & \cdots   & b_{s+1, t-1}  & b_{s+1,t} \\
%\cdot  & \cdot     & \cdot    & \cdot       & \cdot \\
%1        & 1         & \cdots   &  a_{t-1}    & b_{t-1, t}\\
%1       & 1         & \cdots   & 1           & a_t
%\end{array} } \right)$$
\medskip

Let ${f}\in \mathcal{O}^m,~{g}\in \mathcal{O}^n,$ and ${T}\in \mathcal{M}_p$. Then, for $m\geq 0,~ n,p\geq 0,$ we first show that 

\begin{equation}\label{comp module relations}
{f}\bullet_i ({g}\bullet_j T) = \threepartdef
{{g}\bullet_j ({f}\bullet_{i+n-1} T)}      {j<i,}
{({f}\circ_{j-i+1} {g})\bullet_i T}      {j-m<i\leq j,}
{{g}\bullet_{j-m+1} ({f}\bullet_i T)} {0\leq i\leq j-m.}
\end{equation} 
The left hand side is given by 
\begin{equation}
{f}\bullet_i({g}\bullet_j {T})
={f}\bullet_i\left(\otimes \left( {\begin{array}{ccccccccc}
a_0 &\cdots    & b_{0,j-1} & \prod\limits_{t=j}^{n+j-1}b_{0,t} & b_{0,n+j}&\cdots   & b_{0,p} \\
1   &\cdots    & b_{1,j-1} & \prod\limits_{t=j}^{n+j-1}b_{1,t} & b_{1,n+j}&  \cdots   & b_{1, p}\\
\cdot  & \cdots &\cdot   & \cdot & \cdot &\cdots  & \cdot   \\
1 &\cdots    & a_{j-1} & \prod\limits_{t=j}^{n+j-1}b_{j-1,t} & b_{j-1,n+j}&\cdots   & b_{j-1,p} \\
1 &\cdots    & 1 & {g\big(T^{j}_{n+j-1}\big)}&\prod\limits_{s=j}^{n+j-1}b_{s,n+j}&\cdots   & \prod\limits_{s=j}^{n+j-1}b_{s,p} \\
1   &\cdots    & 1 & 1 & a_{n+j}&  \cdots   & b_{n+j, p}\\
\cdot  & \cdots &\cdot   & \cdot & \cdot &\cdots  & \cdot   \\
1   &\cdots    & 1 & 1 & 1 &  \cdots   &  a_{p}  
\end{array} } \right)\right).
\end{equation}
\noindent \textbf{Case 1}: For $j<i$, we have the following expression 
\begin{align*}
&{f}\bullet_i({g}\bullet_j {T})\\
=&{f}\bullet_i\left(\otimes \left( {\begin{array}{ccccccccc}
a_0 &\cdots    & b_{0,j-1} & \prod\limits_{t=j}^{n+j-1}b_{0,t} &\cdots& \prod\limits_{t=i+n-1}^{i+m+n-2}b_{0,t}&\cdots   & b_{0,p} \\
\cdot  & \cdots &\cdot   & \cdot & \cdots &\cdot  & \cdots&\cdot   \\
1 &\cdots    & a_{j-1} & \prod\limits_{t=j}^{n+j-1}b_{j-1,t} &\cdots& \prod\limits_{t=i+n-1}^{i+m+n-2}b_{j-1,t}&\cdots   & b_{j-1,p} \\
1 &\cdots    & 1 & {g\big(T^{j}_{n+j-1}\big)}&\cdots&\prod\limits_{s=j}^{n+j-1}\prod\limits_{t=i+n-1}^{i+m+n-2}b_{s,t}&\cdots   & \prod\limits_{s=j}^{n+j-1}b_{s,p} \\
\cdot  & \cdots &\cdot   & \cdot & \cdots &\cdot  & \cdots&\cdot   \\
1 &\cdots    & 1 & 1& \cdots &{f\big(T^{i+n-1}_{i+m+n-2}\big)}&\cdots   & \prod\limits_{s=i+n-1}^{i+m+n-2}b_{s,p} \\
\cdot  & \cdots &\cdot   & \cdot & \cdots &\cdot  & \cdots&\cdot   \\
1   &\cdots    & 1 & 1 & \cdots & 1& \cdots   &  a_{p}  
\end{array} } \right)\right)\\
=&{g}\bullet_j\left(\otimes \left( {\begin{array}{ccccccccc}
a_0 &\cdots    & b_{0,j-1} & \prod\limits_{t=i+n-1}^{i+m+n-2}b_{0,t}&\cdots   & b_{0,p} \\
\cdot  & \cdots &\cdot   & \cdot & \cdots &\cdot  \\
1 &\cdots    & a_{i+n-2} &  \prod\limits_{t=i+n-1}^{i+m+n-2}b_{j-1,t}&\cdots   & b_{j-1,p} \\
1 &\cdots    & 1 &  {f\big(T^{i+n-1}_{i+m+n-2}\big)}&\cdots   & \prod\limits_{s=i+n-1}^{i+m+n-2}b_{s,p} \\
\cdot  & \cdots &\cdot   & \cdot & \cdots &\cdot  \\
1   &\cdots    & 1 & 1 & \cdots &  a_{p}  
\end{array} } \right)\right)\\
=&{g\bullet_j\Bigg(f\bullet_{i+n-1}}\left(\otimes \left( {\begin{array}{ccccc}
a_0   & b_{0,1}   & \cdots   & b_{0,p-1}   & b_{0,p}\\
1       & a_1       & \cdots   & b_{1, p-1}  & b_{1,p} \\
\cdot  & \cdot     & \cdot    & \cdot       & \cdot \\
1        & 1         & \cdots   &  a_{p-1}    & b_{n-1, p}\\
1       & 1         & \cdots   & 1           & a_p
\end{array} } \right)\right)\Bigg)\\
=&{g}\bullet_j({f}\bullet_{i+n-1}T).
\end{align*}

\medskip

\noindent By a similar calculation, the following cases hold.

\medskip

\noindent \textbf{Case 2:} For $j-m<i\leq j$, we get ${f}\bullet_i({g}\bullet_j {T})={({f}\circ_{j-i+1} {g})\bullet_{i}T}.$

\medskip

\noindent \textbf{Case 3}: For $0\leq i\leq j-m$, we get ${f}\bullet_i({g}\bullet_j {T})={g\bullet_{j-m+1}(f}\bullet_{i}T).$

\medskip
\noindent \textbf{Unitality Condition:} For $\textbf{1}=Id_A\in \mathcal{O}^1=C^1((A,B,\varepsilon);A)$, it is clear that 
$$\textbf{1}\bullet_i T=T, \quad\quad\mbox{for}~~T\in \mathcal{M}_n,~~\mbox{and~~} 0\leq i\leq n.$$ 

We define a map $\mathfrak{t}:\mathcal{M}_p\rightarrow \mathcal{M}_p$ as follows
\begin{equation}\label{cyclic operator def}
\mathfrak{t}\left(\otimes \left( {\begin{array}{ccccc}
a_0   & b_{0,1}   & \cdots   & b_{0,p-1}   & b_{0,p}\\
1       & a_1       & \cdots   & b_{1, p-1}  & b_{1,p} \\
\cdot  & \cdot     & \cdot    & \cdot       & \cdot \\
1        & 1         & \cdots   &  a_{p-1}    & b_{p-1, p}\\
1       & 1         & \cdots   & 1           & a_p
\end{array} } \right)\right)=\otimes \left( {\begin{array}{ccccc} 
a_p & b_{0,p} & b_{1,p} & \cdots& b_{p-1,p}\\
1& a_0   & b_{0,1}   & \cdots  &  b_{0,p-1}\\
1&1       & a_1       & \cdots   & b_{1,p-1} \\
\cdot&\cdot  & \cdot     & \cdot        & \cdot \\
1&1        & 1         & \cdots      & a_{p-1}
\end{array} } \right).
\end{equation}

Next, we show that the map $\mathfrak{t}:\mathcal{M}_p\rightarrow \mathcal{M}_p$ makes the unital comp module $\mathcal{M}_{\bullet}$, a cyclic comp module over the opeard $\mathcal{O}^{\bullet}$. 

For $T\in \mathcal{M}_p$ and ${f}\in \mathcal{O}^{\bullet}$, we have the following expression
 \begin{align*}
\mathfrak{t}~{\big({f}\bullet_i T\big)}
=&\mathfrak{t}\left(\otimes \left( {\begin{array}{ccccccccc}
a_0 &\cdots    & b_{0,i-1} & \prod\limits_{t=i}^{i+m-1}b_{0,t}&\cdots   & b_{0,p} \\
\cdot  & \cdots &\cdot   & \cdot & \cdots &\cdot \\
1 &\cdots    & a_{i-1} &  \prod\limits_{t=i}^{i+m-1}b_{i-1,t}&\cdots   & b_{i-1,p} \\
1 &\cdots    & 1 &  { f\big(T^{i}_{i+m-1}\big)}&\cdots   & \prod\limits_{s=i}^{m+i-1}b_{s,p} \\
\cdot  & \cdots &\cdot   & \cdot & \cdots &\cdot   \\
1   &\cdots    & 1 & 1 & \cdots &  a_{p}  
\end{array} } \right)\right)\\
=&\otimes \left( {\begin{array}{ccccccccc}
a_{p} & b_{0,p}&\cdots &b_{i-1,p}&\prod\limits_{s=i}^{m+i-1}b_{s,p}&\cdots& b_{p-1,p}\\
1&a_0 &\cdots    & b_{0,i-1} & \prod\limits_{t=i}^{i+m-1}b_{0,t}&\cdots   & b_{0,p-1} \\
\cdot  & \cdot &\cdots   & \cdot & \cdot &\cdots &\cdot   \\
1&1 &\cdots    & a_{i-1} &  \prod\limits_{t=i}^{i+m-1}b_{i-1,t}&\cdots   & b_{i-1,p-1} \\
1 &1&\cdots    & 1 &  {f\big(T^{i}_{i+m-1}\big)}&\cdots   & \prod\limits_{s=i}^{m+i-1}b_{s,p-1} \\
\cdot  & \cdot &\cdots   & \cdot & \cdot &\cdots&\cdot   \\
1   &1&\cdots    & 1 & 1 & \cdots &  a_{p-1}  
\end{array} } \right)\\
=&{f}\bullet_{i+1}\left(\otimes \left( {\begin{array}{ccccc} 
a_p & b_{0,p} & b_{1,p} & \cdots& b_{p-1,p}\\
1& a_0   & b_{0,1}   & \cdots  &  b_{0,p-1}\\
1&1       & a_1       & \cdots   & b_{1,p-1} \\
\cdot&\cdot  & \cdot     & \cdot        & \cdot \\
1&1        & 1         & \cdots      & a_{p-1}
\end{array} } \right)\right) ={f}\bullet_{i+1}\mathfrak{t}{(T)}.
\end{align*}
\noindent For $i\geq 1$,
$$\mathfrak{t}^i(T)=
\otimes \left( {\begin{array}{ccccccccc} 
a_{p-i+1} & b_{p-i+1,p-i+2}&\cdots & b_{p-i+1,p}& b_{0,p-i+1} & b_{1,p-i+1} & \cdots & b_{p-i,p-i+1}\\
1&a_{p-i+2} & \cdots & b_{p-i+2,p}& b_{0,p-i+2} & b_{1,p-i+2} & \cdots & b_{p-i,p-i+2}\\
\cdot & \cdot & \cdot & \cdot & \cdot & \cdot & \cdot & \cdot\\
1 & 1 &\cdots & a_{p}& b_{0,p} & b_{1,p} & \cdots& b_{p-i,p}\\
1& 1& \cdots & 1& a_0   & b_{0,1}   & \cdots  &  b_{0,p-i}\\
1&1& \cdots & 1& 1       & a_1       & \cdots   & b_{1,p-i}\\
\cdot & \cdot & \cdot & \cdot & \cdot & \cdot & \cdot & \cdot\\
1&1& \cdots & 1& 1 & 1 & \cdots & a_{p-i}
\end{array} } \right)$$

\noindent Clearly, it follows that the map $\mathfrak{t}:\mathcal{M}_p\rightarrow \mathcal{M}_p$ satisfies the identity $\mathfrak{t}^{p+1}=Id$. Thus,
$$\mathfrak{t}^{p+1}=Id\quad \mbox{and}\quad \mathfrak{t} ~{\big({f}\bullet_i T\big)}={f}\bullet_{i+1} \mathfrak{t}~{(T)}.$$
Therefore, by the above discussion we have the following theorem.
\begin{thm}
Let $(A,B,\varepsilon)$ be a triple and $\mathcal{O}^{\bullet}:=\{\mathcal{O}^n\}_{n\geq 0}$ be the operad structure on the complex $C^{\bullet}((A,B,\varepsilon);A)$. Then the complex $\overline{C}_{\bullet}(A,B,\varepsilon)$ associated to the triple $(A,B,\varepsilon)$ is a cyclic comp module over the operad $\mathcal{O}^{\bullet}$.
\end{thm}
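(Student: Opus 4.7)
The plan is to verify the three defining conditions of a cyclic comp module in turn. First, using the maps $\bullet_i$ defined in (\ref{comp module maps}), which insert $f(T^i_{m+i-1})$ into the $(i,i)$-diagonal slot of the augmented matrix and collapse the surrounding $B$-entries by taking products across the $m$ absorbed rows and columns, I verify the comp module associativity identity (\ref{comp module relations}). The argument is a position analysis with three cases: when $j < i$, the $f$- and $g$-insertions occupy disjoint blocks of the tensor and hence commute up to the index shift $i \mapsto i + n - 1$ forced by the contraction of the $g$-block; when $j - m < i \leq j$, the $f$-block falls entirely inside the $g$-block so the two insertions amalgamate into the single insertion $(f \circ_{j-i+1} g) \bullet_i T$ via the operadic composition formula (\ref{comp products}); and when $0 \leq i \leq j - m$, the configuration is symmetric to the first case. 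Unitality $\mathbf{1} \bullet_i T = T$ is then immediate from $\mathbf{1} = \Id_A \in \mathcal{O}^1$.

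Next, for cyclicity, I define the cyclic operator $\mathfrak{t}$ as in (\ref{cyclic operator def}), which rotates the augmented matrix by sending $a_p$ to the front slot and moving the last column (down to the $a_p$ entry) to the top row. Iterating this rotation $p+1$ times restores the original placement of each $a_j$ and each $b_{r,s}$, giving $\mathfrak{t}^{p+1} = \Id$. The compatibility $\mathfrak{t}(f \bullet_i T) = f \bullet_{i+1} \mathfrak{t}(T)$ holds because the rotation shifts the diagonal position of the inserted entry $f(T^i_{m+i-1})$ from $i$ to $i+1$, while the sub-block $T^i_{m+i-1}$ serving as the argument of $f$ is unchanged as a whole under the rotation.

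The main obstacle is the notational bookkeeping of the $B$-entries: in each case of the three-case verification, one must check that for every pair $(r,s)$ of indices in the output, the product of $b_{r',s'}$'s appearing in that slot agrees on both sides of the identity. Because $B$ is commutative, this reduces to comparing multisets of $b$-indices slot-by-slot. The verification is essentially the same combinatorial argument that underlies the operad structure of $C^{\bullet}((A,B,\varepsilon);A)$ in \cite{Staic3}, extended to the chain complex which carries one additional $a$-slot $a_0$ and an extra row and column of $b$-entries. Once the $j < i$ case has been checked in detail, the other two cases follow by analogous position tracking.
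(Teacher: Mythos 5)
Your proposal is correct and follows essentially the same route as the paper: the same explicit matrix description of the $\bullet_i$ maps, the same three-case position analysis for the comp module identity (of which the paper too only writes out the case $j<i$ in full, leaving the other two as analogous), the same unitality check, and the same rotation operator $\mathfrak{t}$ with the identities $\mathfrak{t}^{p+1}=\Id$ and $\mathfrak{t}(f\bullet_i T)=f\bullet_{i+1}\mathfrak{t}(T)$ verified by tracking how the rotation shifts the inserted slot.
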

\qed
\subsection{Cyclic $k$-module structure on $\overline{C}_{\bullet}(A,B,\varepsilon)$}
The complex $\mathcal{M}_{\bullet}=\overline{C}_{\bullet}(A,B,\varepsilon)$ is a cyclic comp module over the multiplicative operad $(\mathcal{O}^{\bullet}=C^{\bullet}((A,B,\varepsilon);A),\mu)$. It yields the following cyclic $k$-module structure on $\overline{C}_{\bullet}(A,B,\varepsilon)$: 
 
\begin{enumerate}[(i)]
\item for $i=0,1,\ldots,p,$ the face maps $d_i:\mathcal{M}_p\rightarrow \mathcal{M}_{p-1}$ are given as follows

\begin{enumerate}[(a)]
\item if $0\leq i\leq p-1$,
$$d_i(T)=\left(\otimes \left( {\begin{array}{cccccc}
a_0 & \cdots & b_{0,i-1}& b_{0,i}b_{0,i+1}  & \cdots   & b_{0,p}\\
\cdot  & \cdot    & \cdot  &\cdot &\cdot  & \cdot \\
1       & \cdots & a_{i-1} & b_{i-1,i}b_{i-1,i+1} & \cdots    & b_{i-1,p} \\
1&\cdots &1 & a_i a_{i+1}\varepsilon(b_{i,i+1})& \cdots & b_{i,n}b_{i+1,n}\\
\cdot  & \cdot     & \cdot  &\cdot &\cdot  & \cdot \\
1     &\cdots  & 1      &1   & \cdots    & a_p
\end{array} } \right)\right),$$

\item if $i=p$,
$$d_p(T)=\left(\otimes \left( {\begin{array}{ccccc}
\varepsilon(b_{0,n})a_n a_0  &b_{1,p}b_{0,1}  & \cdots   & b_{p-1,p} b_{0,p-1}\\
1       & a_1       & \cdots    & b_{1,p-1} \\
\cdot  & \cdot     & \cdot    & \cdot \\
1        & 1         & \cdots  & b_{p-2, p-1}\\
1       & 1         & \cdots    & a_{p-1}
\end{array} } \right)\right);$$

\end{enumerate} 

\medskip

\item for $j=0,1,\cdots,p,$ the degeneracies $s_j:\mathcal{M}_p\rightarrow \mathcal{M}_{p+1}$ are given by
$$s_j \left(\otimes \left( {\begin{array}{ccccc}
a_0   & b_{0,1}   & \cdots    & b_{0,p}\\
1       & a_1       & \cdots & b_{1,p} \\
\cdot  & \cdot     & \cdot          & \cdot \\
1        & 1         & \cdots    & b_{p-1, p}\\
1       & 1         & \cdots   & a_p
\end{array} } \right)\right)=\otimes \left( {\begin{array}{cccc|c|ccc} 
a_0   & b_{0,1}   & \cdots   & b_{0,j}  &1 &1&\cdots & b_{0,p}\\
1       & a_1       & \cdots   & b_{1, j-1}  &1 &1&\cdots & b_{1,p} \\
\cdot  & \cdot     & \cdot    & \cdot &\cdot &\cdot &\cdot      & \cdot \\
1       & 1& \cdots & a_{j}   & 1   & 1 &\cdots & b_{j,p}\\
\hline
1        & 1         & \cdots   & 1& 1_A &1 &\cdots    & 1\\
\hline
1       & 1& \cdots & 1   & 1   & a_{j+1} &\cdots & b_{j+1,p}\\
\cdot  & \cdot     & \cdot    & \cdot &\cdot &\cdot &\cdot      & \cdot \\
1       & 1         & \cdots   & 1 &1&1 &\cdots          & a_p
\end{array} } \right);$$
\medskip
\item the cyclic operator $\mathfrak{t}:\mathcal{M}_p\rightarrow \mathcal{M}_{p}$ is given by the equation \eqref{cyclic operator def}.
\end{enumerate}

 \medskip

The simplicial boundary operator on the complex $\mathcal{M}_{\bullet}$ is given by 
$$\mathfrak{b}:=\sum_{i=0}^p (-1)^i d_i.$$
Note that the complex $(\mathcal{M}_{\bullet},\mathfrak{b})$ is the same as the secondary Hochschild chain complex associated to the triple $(A,B,\varepsilon)$, defined in Section $3$. Thus, the associated homology $$H_{\bullet}(\mathcal{M})=HH_{\bullet}(A,B,\varepsilon).$$ 

Next, let us recall that the extra degeneracy map, the norm operator and the cyclic differential are given by  
$$s_{-1}(T):=\mathfrak{t}~s_p(T)=1_A\bullet_0 T,\quad N:=\sum_{p=0}^n (-1)^{ip}\mathfrak{t}^i, ~\quad \mbox{and}\quad B:=(Id-t)s_{-1}N.$$
 The normalised complex $\mathfrak{N}(\mathcal{M})_{\bullet}$ is the quotient of the complex $\mathcal{M}_{\bullet}$ by the subcomplex spanned by $\{ 1_A\bullet_{j+1}-, j=0,1,\cdots,p\}$. The cyclic differential $B$ induces the following map on the normalised complex:
\begin{equation}\label{Conne's operator}
B(T)=s_{-1}N(T)=\sum_{i=0}^p (-1)^{ip}~ 1_A\bullet_0 \mathfrak{t}^i (T),\quad \mbox{for  }~~T\in \mathfrak{N}(\mathcal{M})_{p}.
\end{equation}
 
\subsection{Cosimplicial $k$-module structure on the   complex $C^{\bullet}((A,B,\varepsilon);A)$} Now, we consider the cosimplicial $k$-module structure on $\mathcal{O}^{\bullet}$ associated to the multiplicative operad $(\mathcal{O}^{\bullet},\mu)$. Then the face maps $d^i:\mathcal{O}^p\rightarrow \mathcal{O}^{p+1}$ and $s^j:\mathcal{O}^p\rightarrow \mathcal{O}^{p-1}$ are given as follows
 \begin{equation*}
d^i({f}) = \threepartdef
{\mu \circ_2 {f}}      {i=0,}
{{f}\circ_{i} \mu}      {1\leq i\leq p,}
{\mu\circ_1{f}} {i=p+1}
\end{equation*} 
and
 \begin{equation*}
s^j({f}) = {f}\circ_{i+1} 1_A, \quad \mbox{for ~~}j=0,1,\cdots,p-1.
\end{equation*} 
In particular, 

$$s^j(f) \left(\otimes \left( {\begin{array}{ccccc}
a_1   & b_{1,2}   & \cdots    & b_{1,p-1}\\
1       & a_2       & \cdots & b_{2,p-1} \\
\cdot  & \cdot     & \cdot          & \cdot \\
1        & 1         & \cdots    & b_{p-2, p-1}\\
1       & 1         & \cdots   & a_{p-1}
\end{array} } \right)\right)={f}\left(\otimes \left( {\begin{array}{cccc|c|ccc} 
a_1   & \cdots   & b_{1,j}  &1 &1&\cdots & b_{1,p-1}\\
1      & \cdots   & b_{1, j-1}  &1 &1&\cdots & b_{2,p-1} \\
\cdot     & \cdot    & \cdot &\cdot &\cdot &\cdot      & \cdot \\
1       & \cdots & a_{j-1}   & 1   & 1 &\cdots & b_{j-1,p-1}\\
\hline
1              & \cdots   & 1& 1_A &1 &\cdots    & 1\\
\hline
1       & \cdots & 1   & 1   & a_{j} &\cdots & b_{j,p-1}\\
\cdot  & \cdot    & \cdot &\cdot &\cdot &\cdot      & \cdot \\
1    & \cdots   & 1 &1&1 &\cdots          & a_{p-1}
\end{array} } \right)\right).$$

\medskip

The coboundary map $\delta^p:\mathcal{O}^p\rightarrow \mathcal{O}^{p+1}$ is given by $\delta^p:=\sum_{i=0}^{p+1}d^i.$ The cochain complex $(\mathcal{O}^{\bullet},\delta)$ coincides with the complex $(C^{\bullet}((A,B,\varepsilon);A),\delta^{\varepsilon
})$. The conormalised cochain complex is given by
$$\mathfrak{N}(\mathcal{O})^p:=\cap_{j=0}^{p-1}Ker(s^j).$$
Here, $\mathfrak{N}(\mathcal{O})^p$ is the collection of maps in $Hom_k (A^{\otimes p} \otimes B^{\otimes \frac{p(p-1)}{2}} , A)$, which vanish at the elements of the type
 $$\left(\otimes \left( {\begin{array}{ccc|c|cccc} 
a_1   & \cdots   & b_{1,j}  &1 &1&\cdots & b_{1,p-1}\\
1      & \cdots   & b_{1, j-1}  &1 &1&\cdots & b_{2,p-1} \\
\cdot     & \cdot    & \cdot &\cdot &\cdot &\cdot      & \cdot \\
1       & \cdots & a_{j-1}   & 1   & 1 &\cdots & b_{j-1,p-1}\\
\hline
1              & \cdots   & 1& 1_A &1 &\cdots    & 1\\
\hline
1       & \cdots & 1   & 1   & a_{j} &\cdots & b_{j,p-1}\\
\cdot  & \cdot    & \cdot &\cdot &\cdot &\cdot      & \cdot \\
1    & \cdots   & 1 &1&1 &\cdots          & a_{p-1}
\end{array} } \right)\right).$$

\medskip

\noindent Note that the cohomology $H^{\bullet}(\mathfrak{N}(\mathcal{O})^{\bullet})=H^{\bullet}_{\mu}({\mathcal{O}^{\bullet}})=H^{\bullet}((A,B,\varepsilon);A)$.

\subsection{A calculus structure on $\big(H^{\bullet}((A,B,\varepsilon);A),HH_{\bullet}(A,B,\varepsilon)\big)$}
 
The complex $\mathcal{M}_{\bullet}:=\overline{C}_{\bullet}(A,B,\varepsilon)$ is a cyclic comp module over the multiplicative operad $\mathcal{O}=C^{\bullet}((A,B,\varepsilon);A)$. Let us recall from Section 2 that a cyclic (unital) comp module $\mathcal{M}_{\bullet}$ over a multiplicative operad $(\mathcal{O},\mu)$ induces a calculus structure on the pair $\big(H_{\mu}^{\bullet}(\mathcal{O}),H_{\bullet}(\mathcal{M})\big)$. From Subsections 4.2 and 4.3, it follows that 
\begin{align*}
H_{\bullet}(\mathcal{M})&=HH_{\bullet}(A,B,\varepsilon),\\
H^{\bullet}_{\mu}({\mathcal{O}^{\bullet}})&=H^{\bullet}((A,B,\varepsilon);A).
\end{align*}
Therefore, we obtain the following result.

\begin{thm}\label{diffcal}
The pair $\big(H^{\bullet}((A,B,\varepsilon);A),HH_{\bullet}(A,B,\varepsilon)\big)$ forms a noncommutative differential calculus.
\end{thm}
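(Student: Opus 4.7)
The plan is to deduce Theorem \ref{diffcal} as a direct corollary of the cyclic comp module structure established in the preceding (unnumbered) theorem, combined with the general machinery of Kowalzig recalled in Section 2. In particular, we already know that $(\mathcal{O}^{\bullet}, \mu)$ with $\mathcal{O}^{n} = C^{n}((A,B,\varepsilon);A)$ and $\mu$ as in equation \eqref{mult} is a multiplicative non-symmetric operad, and that $\mathcal{M}_{\bullet} = \overline{C}_{\bullet}(A,B,\varepsilon)$ equipped with the maps $\bullet_{i}$ from \eqref{comp module maps} and the cyclic operator $\mathfrak{t}$ from \eqref{cyclic operator def} forms a cyclic unital comp module over $\mathcal{O}^{\bullet}$. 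By the general result recalled at the end of Section 2, any such cyclic comp module automatically produces a noncommutative differential calculus on the pair $\bigl(H^{\bullet}_{\mu}(\mathcal{O}^{\bullet}), H_{\bullet}(\mathcal{M}_{\bullet})\bigr)$, so the only remaining task is to identify these two (co)homologies with the secondary Hochschild (co)homologies in the statement.

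First I would note that the cup product $\smile$ on $\mathcal{O}^{\bullet}$ induced by $\mu$ via $f \smile g = (\mu \circ_{2} f) \circ_{1} g$ matches the cup product recalled in Subsection 3.2, and the Gerstenhaber bracket coincides with the one from \cite{Staic3}. Then I would verify, as indicated in Subsection 4.3, that the cosimplicial differential $\delta^{p} = \sum_{i=0}^{p+1}(-1)^{i} d^{i}$ with $d^{i}$ obtained from $\mu$-composition coincides with the secondary Hochschild differential $\delta^{\varepsilon}$; this is a direct comparison using the definition of $\mu$, since $\mu \circ_{2} f$, $f \circ_{i} \mu$, and $\mu \circ_{1} f$ produce exactly the three types of terms appearing in $\delta^{\varepsilon}(f)$ after inserting $\varepsilon$ of the appropriate $B$-entries. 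Consequently $H^{\bullet}_{\mu}(\mathcal{O}^{\bullet}) = H^{\bullet}((A,B,\varepsilon);A)$ as Gerstenhaber algebras.

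Similarly, I would identify the simplicial boundary $\mathfrak{b} = \sum_{i=0}^{p}(-1)^{i} d_{i}$, where $d_{i}(T) = \mu \bullet_{i} T$ for $i \le p-1$ and $d_{p}(T) = \mu \bullet_{0} \mathfrak{t}(T)$, with the differential $\partial$ on $\overline{C}_{\bullet}(A,B,\varepsilon)$ defined in Subsection 3.3. Substituting the formula \eqref{mult} for $\mu$ into the comp-module action \eqref{comp module maps} produces exactly the internal faces of $\partial$ (with the factor $\varepsilon(b_{i,i+1}) a_{i} a_{i+1}$ appearing at the merged slot), while applying $\mu \bullet_{0}$ after the cyclic rotation $\mathfrak{t}$ yields precisely the last boundary term of $\partial$ with the $\varepsilon(b_{0,n}) a_{n} a_{0}$ factor. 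Therefore $H_{\bullet}(\mathcal{M}_{\bullet}) = HH_{\bullet}(A,B,\varepsilon)$.

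Most of the delicate work has already been discharged: the three cases of the relation \eqref{comp module relations}, the unitality, and the cyclicity identity $\mathfrak{t}(f \bullet_{i} T) = f \bullet_{i+1} \mathfrak{t}(T)$ together with $\mathfrak{t}^{p+1} = \mathrm{Id}$ have all been verified. The main potential obstacle is the bookkeeping check that the cosimplicial and simplicial structures extracted from $(\mathcal{O}^{\bullet}, \mu, \mathcal{M}_{\bullet})$ actually reproduce the secondary complexes from \cite{Staic2, Staic4}, particularly keeping track of how the $B$-entries $b_{i,j}$ collapse under the compositions $\mu \circ_{i} -$ and $\mu \bullet_{i} -$; once this identification is made, Theorem \ref{diffcal} follows immediately by invoking the Cartan-Rinehart homotopy formula and the formula \eqref{Norm-Operator} for Connes' operator $B$ at the level of the normalised complex, thereby exhibiting $HH_{\bullet}(A,B,\varepsilon)$ as a Batalin-Vilkovisky module over the Gerstenhaber algebra $H^{\bullet}((A,B,\varepsilon);A)$.
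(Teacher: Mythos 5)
Your proposal is correct and follows essentially the same route as the paper: the authors likewise deduce the theorem by combining the cyclic comp module structure on $\overline{C}_{\bullet}(A,B,\varepsilon)$ over the multiplicative operad $C^{\bullet}((A,B,\varepsilon);A)$ with Kowalzig's general result from Section 2, and then identifying $H^{\bullet}_{\mu}(\mathcal{O}^{\bullet})$ with $H^{\bullet}((A,B,\varepsilon);A)$ and $H_{\bullet}(\mathcal{M}_{\bullet})$ with $HH_{\bullet}(A,B,\varepsilon)$ exactly as you describe (their Subsections 4.2 and 4.3). The extra bookkeeping you flag about how the $b_{i,j}$ entries collapse under $\mu\circ_i$ and $\mu\bullet_i$ is precisely the content of those subsections, so nothing is missing.
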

\qed
\newpage
\section{conclusion}
M. Staic first introduced secondary Hochschild (co)homology \cite{Staic1,Staic2} in order to study the deformation theory of associative algebras over a commutative ring. The secondary Hochschild (co)homology behaves similar to the Hochschild (co)homology in several aspects. However, unlike Hochschild (co)homology, there is no natural cyclic action on the (co)chain complex. The absence of a natural cyclic action also explains the natural constructions of new complexes while introducing secondary cyclic (co)homology \cite{Staic4}. 

In the Hochschild case, there is a noncommutative differential calculus $(HH^\bullet(A,A),HH_{\bullet}(A,A))$. Under certain conditions \cite{Ginzburg}, the isomorphism $HH^\bullet(A,A)\cong HH_{\bullet}(A,A)$ yields a BV algebra structure on the cohomology $HH^\bullet(A,A)$. In fact, the condition obtained by \cite{Ginzburg} (in the case of Calabi-Yau algebras) can be written in general for any calculus: if $(\mathcal{A},\Omega)$ is a calculus, then 
\begin{align}\label{BV-relation}
\nonumber
i_{[f,g]}&=[i_f,\mathcal{L}_g]\\\nonumber
&=i_f\circ \mathcal{L}_g-(-1)^{m(n+1)}\mathcal{L}_g\circ i_f\\\nonumber
&=i_f\circ (B\circ i_g-(-1)^n i_g\circ B)-(-1)^{m(n+1)}(B\circ i_g-(-1)^n i_g\circ B)\circ i_f\\
&=i_f\circ B\circ i_g -(-1)^{n} i_{f\smile g}\circ B -(-1)^{m(n+1)}(-1)^{mn} B\circ i_{f\smile g}+(-1)^{m(n+1)+n} i_g\circ B\circ i_f
\end{align}
for $f\in \mathcal{A}^m,~g\in \mathcal{A}^n$. Subsequently, we obtain the following result which gives a condition on the calculus such that the underlying Gerstenhaber algebra is a Batalin-Vilkovisky (BV) algebra.

\begin{thm}\label{diffcal-BV}
Let $(\mathcal{A},\Omega)$ be a calculus. If there exists an element $c\in \Omega_k$ such that $B(c)=0$ and the map $\Theta:\mathcal{A}^\bullet\rightarrow \Omega_{k-\bullet}$, given by $\Theta(f)=i_fc$ is an isomorphism, then the map $\Delta:\mathcal{A}^\bullet\rightarrow \mathcal{A}^{\bullet-1},$ defined by $i_{\Delta f}c=B(i_f c)$
makes the Gerstenhaber algebra $\mathcal{A}$ into a BV algebra.   
\end{thm}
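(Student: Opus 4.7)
The plan is to exploit the hypothesis that $\Theta\colon\mathcal{A}^\bullet\to \Omega_{k-\bullet}$ is an isomorphism as a transport mechanism, so that all identities on $\mathcal{A}$ can be verified after applying $i_{(\cdot)}c$, where they become identities in $\Omega$ that follow from the calculus structure together with $B(c)=0$. First I would check that $\Delta$ is well defined and of degree $-1$: for each $f\in\mathcal{A}^m$, the element $B(i_f c)\in\Omega_{k-m+1}$ lies in the image of $\Theta$, so there is a unique $\Delta f\in\mathcal{A}^{m-1}$ with $\Theta(\Delta f)=B(i_fc)$.

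Next I would verify $\Delta^2=0$. Applying $\Theta$ and using the definition twice gives
\begin{equation*}
\Theta(\Delta^2 f)=i_{\Delta^2 f}c=B(i_{\Delta f}c)=B\bigl(B(i_fc)\bigr)=B^2(i_fc)=0,
\end{equation*}
since $B^2=0$ on $\Omega$. Because $\Theta$ is injective, $\Delta^2f=0$.

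The main step is the BV identity $[f,g]=\pm\bigl(\Delta(f\smile g)-\Delta f\smile g-(-1)^{m} f\smile\Delta g\bigr)$. Here I would apply both sides of the identity \eqref{BV-relation} to the element $c$, term by term. The term $(-1)^n i_{f\smile g}\circ B$ applied to $c$ vanishes because $B(c)=0$. The remaining three terms are computed using $i_fi_g=i_{f\smile g}$ and the definition of $\Delta$: $B(i_gc)=\Theta(\Delta g)$ yields $i_fBi_gc=i_{f\smile\Delta g}c=\Theta(f\smile\Delta g)$; the term $B\circ i_{f\smile g}(c)$ is $\Theta(\Delta(f\smile g))$ by definition; and $i_gBi_fc=i_{g\smile\Delta f}c$, which by graded commutativity of $\smile$ equals $(-1)^{n(m-1)}\Theta(\Delta f\smile g)$. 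Collecting and reducing the exponents modulo $2$ (both $m(n+1)+mn$ and $m(n+1)+n+n(m-1)$ reduce to $m$) yields
\begin{equation*}
\Theta([f,g])=\Theta(f\smile\Delta g)-(-1)^m\Theta(\Delta(f\smile g))+(-1)^m\Theta(\Delta f\smile g).
\end{equation*}
Since $\Theta$ is an isomorphism, this descends to the desired BV identity for $\Delta$ on $\mathcal{A}$.

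The routine part is the sign bookkeeping above; the only conceptual point is that the assumption $B(c)=0$ is precisely what is needed to annihilate the stray $i_{f\smile g}\circ B$ term, and the isomorphism hypothesis on $\Theta$ is what lets one both define $\Delta$ and pull the resulting identity in $\Omega$ back to an identity in $\mathcal{A}$. I expect the main obstacle to be reconciling the sign conventions of \eqref{BV-relation} with the sign in the BV-algebra axiom stated in the introduction; this is a matter of carefully tracking the parity of each exponent and noting that any overall $\pm$ sign is absorbed by the $\pm$ permitted in the paper's definition of a BV-operator.
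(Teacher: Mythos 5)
Your proposal is correct and follows essentially the same route as the paper: apply identity \eqref{BV-relation} to the element $c$, use $B(c)=0$ to kill the $i_{f\smile g}\circ B$ term, rewrite the remaining terms via $i_fi_g=i_{f\smile g}$ and the defining relation $i_{\Delta f}c=B(i_fc)$, and pull the resulting identity back through the isomorphism $\Theta$; your sign reductions (both exponents collapsing to $m$ mod $2$) agree with the paper's stated conclusion. The only difference is that you also verify $\Delta^2=0$ explicitly, a point the paper's proof leaves tacit.
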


\begin{proof}
Since $B(c)=0$ and $i_{\Delta f}c=B(i_f c)$, by equation \eqref{BV-relation} it follows that 
$$i_{[f,g]}(c)=-(-1)^{m}\big(i_{\Delta(f\smile g)}c -i_{\Delta(f)\smile g}c -(-1)^m i_{f\smile\Delta(g)}c\big).$$
In turn the condition that $\Theta$ is an isomorphism implies that the operator $\Delta$ generates the Gerstenhaber bracket on $\mathcal{A}$, i.e. 
$$[f,g]=-(-1)^{m}\big(\Delta(f\smile g)- \Delta(f)\smile g  -(-1)^m  f\smile\Delta(g)\big),\quad\mbox{for all } f\in \mathcal{A}^m,~g\in \mathcal{A}^n.$$
\end{proof}

The above theorem was first proved by T. Lambre in \cite{Lambre}. Theorem \ref{diffcal-BV} gives a condition on the calculus $\big(H^{\bullet}((A,B,\varepsilon);A),HH_{\bullet}(A,B,\varepsilon)\big)$ such that the Gerstenhaber algebra structure on $H^{\bullet}((A,B,\varepsilon),A)$ becomes a BV algebra structure. More precisely, if we have an element $[T]\in HH_{\bullet}(A,B,\varepsilon)$ such that $B[T]=0$ and the map $$\Theta: H^{\bullet}((A,B,\varepsilon),A)\rightarrow HH_{k-\bullet}(A,B,\varepsilon),~~\mbox{ given by }\Theta(f)=i_f[T]$$ is an isomorphism, then $H^{\bullet}((A,B,\varepsilon),A)$ is a BV algebra. 

In the Hochschild case, if $A$ is symmetric algebra then the calculus $(HH^\bullet(A,A),HH_{\bullet}(A,A))$ satisfies the conditions in the Theorem \ref{diffcal-BV} and the Hochschild cohomology $HH^\bullet(A,A)$ carries a BV algebra structure (see \cite{Menichi04, Menichi, Tradler} for details). It will be interesting to find conditions on the $B$-algebra $A$ such that the conditions in Theorem \ref{diffcal-BV} hold. It needs further investigation since 1) there is no derived functor description for the secondary Hochschild (co)homology, and 2)- even in the finite-dimensional symmetric algebra case, the most canonical choice for the BV-operator does not lift to the secondary Hochschild cohomology  \cite{BV-2020}. More precisely, in \cite{BV-2020} the authors consider a finite-dimensional symmetric algebra $A$ and define a BV-operator $\Delta$ on the homotopy Gerstenhaber algebra $C^*((A, B, \varepsilon), A)$, which due to proposition\,9 and theorem\,10 in \cite{BV-2020}, is the most canonical choice for a square zero BV-differential operator on $H^*((A, B, \varepsilon), A)$. Though $\Delta$ determines the graded Lie bracket on $H^*((A, B, \varepsilon), A)$, it is not a cochain map in general (see theorem\,7, \cite{BV-2020}).

\noindent {\em Acknowledgements.} The research of S. K. Mishra is supported by the NBHM postdoctoral fellowship. The author thanks NBHM for its support.

%Let $A$ be an finite dimensional associative algebra over $k$. Then $A$ is symmetric algebra if $A$ is equipped with an isomorphism $A$-bimodules between $A$ and $Hom(A,k)$. T. Tradler \cite{Tradler} defined an operator $\Delta$ on the Hochschild cohomology such that the Gerstenhaber bracket is generated by the operator $\Delta$. The operator $\Delta$ is the dual of the Connes' operator on the Hochschild chains and transferred to Hochschild cochains using the isomorphism $A$-bimodules between $A$ and $Hom(A,k)$. In the secondary case, the analogously defined operator does not lift to the\cite{BV-2020}

\end{document}